\newtheorem{theorem}{Theorem}[section]
\newtheorem{lemma}{Lemma}[section]
\theoremstyle{definition}
\newtheorem{definition}{Definition}[section]
\newtheorem{remark}{Remark}[section]
\begin{document}

\title{Admissibility, Stable Units and Connected Components}

% Remove or comment out any unused author tags.
% author one information
\author{Jo\~{a}o J. Xarez}
\address{Departamento de Matem\'{a}tica, Universidade de Aveiro. Campus
Universit\'{a}rio de Santiago. 3810-193 Aveiro. Portugal.}
%\curraddr{}
\email{xarez@ua.pt}
\thanks{The author would like to acknowledge the financial support of {\em Unidade de Investiga\c{c}\~{a}o Matem\'{a}tica e Aplica\c{c}\~{o}es}
 of Universidade de Aveiro, through {\em Programa Operacional Ci\^{e}ncia e Inova\c{c}\~{a}o 2010}
  (POCI 2010) of the {\em Funda\c{c}\~{a}o para a Ci\^{e}ncia e a Tecnologia} (FCT), cofinanced by the European Community fund FEDER}

% author two information
%\author{}
%\address{}
%\curraddr{}
%\email{}
%\thanks{}

% Use this \subjclass if you are using amsproc version 2.0 (December 1999).
\subjclass[2000]{18A40, 54B30, 18B30, 18B40, 20M07, 20M50, 18E35}

\keywords{Connected component, semi-left-exactness, stable units,
left-exactness, simple reflection, admissible reflection,
localization, Galois theory}

% Use this one if you are using an older version of amsproc.
%\subjclass{}
%\date{18 May 2004}

\begin{abstract}
Consider a reflection from a finitely-complete category $\mathbb{C}$
into its full subcategory $\mathbb{M}$, with unit $\eta
:1_\mathbb{C}\rightarrow HI$. Suppose there is a left-exact functor
$U$ into the category of sets, such that $UH$ reflects isomorphisms
and $U(\eta_C)$ is a surjection, for every $C\in\mathbb{C}$. If, in
addition, all the maps $\mathbb{M}(T,M)\rightarrow
\mathbf{Set}(1,U(M))$ induced by the functor $UH$ are surjections,
where $T$ and $1$ are respectively terminal objects in $\mathbb{C}$
and $\mathbf{Set}$, for every object $M$ in the full subcategory
$\mathbb{M}$, then it is true that: the reflection $H\vdash I$ is
semi-left-exact (admissible in the sense of categorical Galois
theory) if and only if its connected components are ``connected"; it
has stable units if and only if any finite product of connected
components is ``connected". Where the meaning of ``connected" is the
usual in categorical Galois theory, and the definition of connected
component with respect to the ground structure will be given. Note
that both algebraic and topological instances of Galois structures
are unified in this common setting, with respect to categorical
Galois theory.

\end{abstract}

\maketitle

\section{Introduction}
\label{sec:Introduction}

A reflection $H\vdash I$ from a category $\mathbb{C}$ into its full
subcategory $\mathbb{M}$ can be seen as a Galois structure, one in
which all morphisms are taken into account. Hence, such a reflection
is semi-left-exact (in the sense of \cite{CHK:fact}) if and only if
it is an admissible Galois structure (in the sense of categorical
Galois theory). The fundamental theorem of categorical Galois theory
states that, for an admissible Galois structure as above, that is, a
semi-left-exact reflection into a full subcategory, there is an
equivalence $ Spl(E,p)\simeq\mathbb{M}^{Gal(E,p)}$, for every
effective descent morphism $p:E\rightarrow B$ in $\mathbb{C}$,
between the full subcategory $Spl(E,p)$ of the comma category
$(\mathbb{C}\downarrow B)$, determined by the morphisms split by
$p:E\rightarrow B$, and the category $\mathbb{M}^{Gal(E,p)}$ of
actions of the Galois pregroupoid $Gal(E,p)$ in $\mathbb{M}$ (see
\cite{CJKP:stab}). To establish the existence of such equivalences,
that is, in order to prove that the reflection is semi-left-exact,
it is necessary to show, for every $B\in\mathbb{C}$ and every
$(M,g)\in (\mathbb{M}\downarrow I(B))$, that the counit morphism
$\varepsilon_{(M,g)}^B:I^BH^B(M,g)\rightarrow (M,g)$ is an
isomorphism, where $H^B\vdash I^B:(\mathbb{C}\downarrow
B)\rightarrow (\mathbb{M}\downarrow I(B))$ is the induced
adjunction. In the current paper, we prove it is enough to show that
every $\varepsilon_{(T,g)}^B$ is an isomorphism when $T$ is a
terminal object, in order to guarantee semi-left-exactness, provided
there is a (``forgetful'') functor $U$ from $\mathbb{C}$ into sets,
satisfying certain conditions. Such is the case of the two
reflections $\mathbf{CompHaus}\rightarrow \mathbf{Stone}$, compact
Hausdorff spaces into Stone spaces, and $\mathbf{SGr}\rightarrow
\mathbf{SLat}$, semigroups into semilattices, where ``connected
components are connected'' (meaning that the counit morphisms
$\varepsilon_{(T,g)}^B$ are all isomorphisms, which amounts to the
preservation by the reflector of the ``connected component" pullback
diagrams). Furthermore, these two examples are known to satisfy a
stronger condition than semi-left-exactness. In fact, both
reflections $\mathbf{CompHaus}\rightarrow \mathbf{Stone}$ and
$\mathbf{SGr}\rightarrow \mathbf{SLat}$ have stable units (see
\cite{CJKP:stab} and \cite{JLM}, respectively). We will also state
that such a Galois structure with such a ``forgetful'' functor does
have stable units if and only if ``finite products of connected
components are connected''. A connected component is simply the
pullback $C\times_{(\eta_C,\mu)}T$ of a morphism $\mu :T\rightarrow
HI(C)$ from a terminal object $T$ along a unit morphism
$\eta_C:C\rightarrow HI(C)$. Therefore, in our setting,
semi-left-exactness and the stable units property are simplified and
the Galois structures can be classified according to the reflection
of connected components and its products, respectively.

Besides semi-left-exactness and the stable units property, there is
a weaker property and also a stronger one. When the former holds, a
reflection is called simple. A reflection where the latter holds is
called a localization, meaning that the reflector is left-exact,
that is, it preserves finite limits. In our setting, a sufficient
condition, for a reflection to be a localization, will be given on
the connected components. Also, semi-left-exact and simple
reflections are shown to coincide, provided a further condition
holds for the left adjoint $I$.

Finally, the author would like to mention that the results in this
paper had their origin in generalizing the proof of Theorem 3 in
\cite{JLM}, where it is shown that the reflection of semigroups into
semilattices has stable units.\footnote{The property that
``connected components are connected", i.e., semi-left-exactness in
our setting, was called attainability in \cite{Tamura}, in the
particular case of semigroups.}

\section{Ground Structure}
\label{sec:Ground Structure}

In this section \ref{sec:Ground Structure}, it is given the setting
in which all the propositions of the current paper hold.

Consider an adjunction $H\vdash I:\mathbb{C}\rightarrow \mathbb{M}$,
with unit $\eta :1_\mathbb{C}\rightarrow HI$, such that the category
$\mathbb{C}$ has finite limits and the right adjoint $H$ is a full
inclusion of $\mathbb{M}$ in $\mathbb{C}$, i.e., the adjunction is a
reflection of the category $\mathbb{C}$ into its full subcategory
$\mathbb{M}$. Consider as well a functor $U:\mathbb{C}\rightarrow
\mathbf{Set}$ from $\mathbb{C}$ into the category of sets, with the
following properties:

\begin{enumerate}

\item [$(a)$] $U$ is left exact (i.e., $U$ preserves finite limits);

\item [$(b)$] $UH$ reflects isomorphisms;

\item [$(c)$] every map $U(\eta_C):U(C)\rightarrow UHI(C)$ is a
surjection, for every unit morphism $\eta_C$ of the reflection
above, $C\in\mathbb{C}$;

\item [$(d)$] every map $\mathbb{C}(T,M)\rightarrow\mathbf{Set}(U(T),U(M))$, which is the restriction of the functor $U$
to the hom-set $\mathbb{C}(T,M)$, is a surjection, for any object
$M\in\mathbb{M}$, with $T$ a terminal object in $\mathbb{C}$.

\end{enumerate}

\begin{remark}\label{remark:HI=1}

It is convenient, without no loss of generality, to chosen the unit
$\eta :1_\mathbb{C}\rightarrow HI$ so that the counit is an identity
$IH=1_\mathbb{M}$.

\end{remark}

\begin{remark}\label{remark:T in M}

It is also convenient to assume, without no loss of generality, that
$T$ is a terminal object chosen to be in $\mathbb{M}$. In such case,
$\mathbb{C}(T,M)=\mathbb{M}(T,M)$ in $(d)$.\footnote{Recall that a
full reflective subcategory $\mathbb{M}$ of $\mathbb{C}$ is closed
for limits in $\mathbb{C}$.}

\end{remark}

\begin{remark}\label{remark:split monic delta_T=>(d)}

Suppose $UH$ has a left adjoint $F$, being the counit morphism of
such an adjunction $\delta :F(UH)\rightarrow 1_\mathbb{M}$. If the
counit morphism of a terminal object $\delta_T:F(UH)(T)\rightarrow
T$ is a split monomorphism then condition $(d)$ necessarily holds.
Notice that all functors $UH$, considered in any instance of the
ground structure presented in last section \ref{sec:Examples}, have
a left adjoint, and the respective counit morphisms $\delta_T$ of
terminal objects are isomorphisms, i.e., $F$ preserves the terminal
objects in $\mathbf{Set}$.\footnote{Notice that any counit morphism
is an isomorphism if it is a monomorphism, provided the right
adjoint reflects isomorphisms.}

\end{remark}

\section{Properties of the Reflection}
\label{sec:Properties of the Reflection}

It is to be defined when the reflection $I\dashv H$ is 1.
\textit{simple}, 2. \textit{semi-left-exact} or 3. \textit{to have
stable units} (notions introduced in \cite{CHK:fact}). One easily
checks from the definitions below that if $I$ is a left-exact
functor, in which case the reflection is called a
\textit{localization}, then 1., 2. and 3. hold, and that 3. is
stronger than 2., which in turn is stronger than 1. ($I$ is left
exact $\Rightarrow I\dashv H$ has stable units $\Rightarrow I\dashv
H$ is semi-left-exact $\Rightarrow I\dashv H$ is simple). The
semi-left-exactness is also called \textit{admissibility} in
categorical Galois theory (see \cite{CJKP:stab}).

\begin{definition}\label{def:simple reflection}
The reflection $I\dashv H$ is called simple if the morphism
$I(w):I(A)\rightarrow I(C)$ is an isomorphism in every diagram of
the form
\\
\\
\\
\\
\\

\begin{equation}\label{eq:simple reflection}
\vcenter{ $$
\begin{picture}(50,40)(0,0)
\put(0,0){$B$}\put(0,50){$C$}\put(100,0){$HI(B)$\hspace{10pt,}}\put(100,50){$HI(A)$}
%\put(10,25){$v$}
\put(120,25){$HI(f)$} \put(50,10){$\eta_B$}
%\put(50,60){$u$}
\put(15,3){\vector(1,0){80}}\put(15,53){\vector(1,0){80}}
\put(3,45){\vector(0,-1){35}}\put(115,45){\vector(0,-1){35}}
\put(-50,100){$A$}\put(-35,50){$f$}\put(25,90){$\eta_A$}
\put(-10,72){$w$}
\put(-40,98){\vector(1,-1){40}}\put(-45,95){\vector(1,-2){43}}\put(-40,103){\vector(3,-1){135}}
\end{picture}
$$\\
}
\end{equation}

\noindent where the rectangular part of the diagram is a pullback
square, $\eta_A$ and $\eta_B$ are unit morphisms, and $w$ is the
unique morphism which makes the diagram commute.

\end{definition}

\begin{remark}\label{remark:simple reflection}

The functor between comma categories $I^B:(\mathbb{C}\downarrow
B)\rightarrow (\mathbb{M}\downarrow I(B))$, sending $f:A\rightarrow
B$ to $I(f)$, has a right adjoint $H^B$ sending $g:M\rightarrow
I(B)$ to its pullback along $\eta_B:B\rightarrow HI(B)$, for each
$B\in\mathbb{C}$. Hence, $I\dashv H$ is simple if and only if
$I^B\eta^B$ is an isomorphism for every $B\in\mathbb{C}$, where
$\eta^B$ is the unit of the adjunction $I^B\dashv H^B$
(equivalently, $\varepsilon^BI^B$ is an isomorphism for every
$B\in\mathbb{C}$, where $\varepsilon^B$ is the counit of $I^B\dashv
H^B$).

\end{remark}

\begin{definition}\label{def:semi-left-exact reflection}

The reflection $I\dashv H$ is called semi-left-exact, or admissible,
if the left adjoint $I$ preserves all pullback squares of the form
\\
\\

\begin{equation}\label{eq:semi-left-exact reflection}
\vcenter{ $$
\begin{picture}(100,20)
\put(0,0){$C$}\put(-25,50){$C\times_{HI(C)}M$}\put(80,0){$HI(C)$\hspace{10pt,}}\put(90,50){$M$}
\put(-8,25){$\pi_1$}\put(100,25){$g$}
\put(40,10){$\eta_C$}\put(50,60){$\pi_2$}
\put(15,3){\vector(1,0){60}}\put(33,53){\vector(1,0){50}}
\put(4,45){\vector(0,-1){33}}\put(95,45){\vector(0,-1){33}}
\end{picture}
$$\\
}
\end{equation}

\noindent where the bottom arrow $\eta_C$ is a unit morphism, and
the object $M$, in the upper corner to the right, is in the
subcategory $\mathbb{M}$.

\end{definition}

\begin{remark}\label{remark:semi-left-exact reflection}
The reflection $I\dashv H$ is semi-left-exact if and only if the
functor $I$ preserves all pullback squares in which the arrow in the
right edge is in the subcategory $\mathbb{M}$, as it is easy to
prove. Equivalently, $I\dashv H$ is semi-left-exact if and only if
the right adjoint $H^B$ is fully faithful ($\varepsilon^B$ is an
isomorphism) for every $B\in\mathbb{C}$. Therefore, the reflection
is simple if it is semi-left-exact (cf. remark \ref{remark:simple
reflection}).

\end{remark}

\begin{definition}\label{def:stable units}

The reflection $I\dashv H$ has stable units if the left adjoint $I$
preserves all pullback squares of the form
\\
\\
\\

\begin{equation}\label{eq:stable units}
\vcenter{ $$
\begin{picture}(100,20)
\put(0,0){$C$}\put(-25,50){$C\times_{HI(C)}D$}\put(80,0){$HI(C)$\hspace{10pt,}}\put(90,50){$D$}
\put(-10,25){$\pi_1$}\put(100,25){$g$}
\put(45,10){$\eta_C$}\put(45,60){$\pi_2$}
\put(15,3){\vector(1,0){63}}\put(30,53){\vector(1,0){48}}
\put(3,45){\vector(0,-1){33}}\put(95,45){\vector(0,-1){33}}
\end{picture}
$$\\
}
\end{equation}

\noindent in which the bottom arrow $\eta_C$ is a unit morphism.

\end{definition}

\begin{remark}\label{remark:stable units}

One could also show that the reflection $I\dashv H$ has stable units
if and only if the left adjoint $I$ preserves all pullback squares
in which the object at the right corner in the bottom belongs to the
subcategory $\mathbb{M}$.

\end{remark}

\section{Admissibility and Connected Components}
\label{sec:Admissibility and Connected Components}

\begin{definition}\label{def:connected component}

Consider any morphism $\mu :T\rightarrow HI(C)$ from a terminal
object $T$ into $HI(C)$, for some $C\in\mathbb{C}$.

The connected component of the morphism $\mu$, with respect to the
ground structure of section \ref{sec:Ground Structure}, is the
pullback $C_\mu =C\times_{HI(C)}T$ in the following pullback square
\\
\\
\\

\begin{equation}\label{eq:connected component}
\vcenter{ $$
\begin{picture}(100,20)
\put(0,0){$C$}\put(0,50){$C_\mu$}\put(80,0){$HI(C)$\hspace{10pt.}}\put(90,50){$T$}
\put(-15,25){$\pi_1^\mu$}\put(100,25){$\mu$}
\put(40,10){$\eta_C$}\put(40,60){$\pi_2^\mu$}
\put(15,3){\vector(1,0){60}}\put(20,53){\vector(1,0){54}}
\put(3,45){\vector(0,-1){33}}\put(95,45){\vector(0,-1){33}}
\end{picture}
$$\\
}
\end{equation}

\end{definition}

The following Theorem \ref{theorem:admissibility=connected
components connected} states that, under the assumptions given in
section \ref{sec:Ground Structure}, in order to prove the
semi-left-exactness of the full reflection $I\dashv H$, one has only
to establish the preservation by $I$ of the pullback squares like
those in diagram (\ref{eq:semi-left-exact reflection}) in which the
object $M$ is terminal. So, in our context, semi-left-exactness
reduces to connected components being ``connected'', in the sense
$HI(C_\mu)\cong T$. Notice that $HI(C_\mu)\cong T$ if and only if
$I(\pi_2^\mu)$ is an isomorphism in diagram (\ref{eq:connected
component}), since $HI(T)\cong T$.\\

The following Lemma \ref{lemma:surjection => injectivity in Set},
which states a trivial result in sets, will be needed in the proofs
of the ``if parts'' of Theorems \ref{theorem:admissibility=connected
components connected} and \ref{theorem:stable units=product of
connected components connected}.

\begin{lemma}\label{lemma:surjection => injectivity in Set}

Let $gf$ be the composite of a pair $f:A\rightarrow B$,
$g:B\rightarrow C$ of surjections in the category of sets. Consider
the pullback $pr_1:f^{-1}g^{-1}(\{c\})\rightarrow A$ of the function
$\hat{c}:\{\ast\}\rightarrow C$, $\hat{c}(\ast)=c$, along the
function $gf:A\rightarrow C$, for any element $c\in C$ (see diagram
(\ref{eq:surjection => injectivity in Set}) below). Then, the
function $g$ is an injection if and only if, for every element $c\in
C$, $fw=\hat{b}!$ for some function $\hat{b}:\{\ast\}\rightarrow B$
(i.e., $fw$ factorises through a one point set), where $!$ denotes
the unique function into $\{\ast\}$.
\\
\\
\\

\begin{equation}\label{eq:surjection => injectivity in Set}
\vcenter{ $$
\begin{picture}(100,20)
\put(0,0){$A$}\put(-20,50){$f^{-1}g^{-1}(\{c\})$}\put(85,0){B}\put(170,0){$C$}\put(167,50){$\{\ast\}$}
\put(-12,25){$pr_1$}\put(180,25){$\hat{c}$}\put(40,-8){$f$}\put(130,-8){$g$}\put(83,57){$pr_2$}
\put(15,3){\vector(1,0){60}}\put(100,3){\vector(1,0){60}}\put(40,53){\vector(1,0){120}}
\put(3,45){\vector(0,-1){33}}\put(175,45){\vector(0,-1){33}}

\put(83,30){$\{\ast\}$}\put(90,25){\vector(0,-1){13}}\put(95,15){$\hat{b}$}\put(35,45){\vector(4,-1){40}}
\end{picture}
$$\\
}
\end{equation}

\end{lemma}

\begin{theorem}\label{theorem:admissibility=connected components connected}
Under the assumptions of section \ref{sec:Ground Structure}, the
full reflection $I\dashv H$ is semi-left-exact if and only if
$HI(C_\mu)\cong T$, for every connected component $C_\mu$, where $T$
is any terminal object.

\end{theorem}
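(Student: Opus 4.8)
The plan is to prove both implications by analysing the single morphism $I(\pi_2)$ obtained by applying $I$ to the defining square (\ref{eq:semi-left-exact reflection}). Since the counit is normalised to an identity (Remark \ref{remark:HI=1}), the unit component $I(\eta_C)$ is an identity, so in the image square the comparison morphism from $I(C\times_{HI(C)}M)$ to the pullback of $I(g)$ along $I(\eta_C)$ is exactly $I(\pi_2)\colon I(C\times_{HI(C)}M)\to I(M)$. Hence semi-left-exactness is equivalent to $I(\pi_2)$ being an isomorphism for every square (\ref{eq:semi-left-exact reflection}); in particular, taking $M=T$ it forces $I(\pi_2^\mu)$ to be an isomorphism, whence $I(C_\mu)\cong I(T)=T$ and $HI(C_\mu)\cong T$. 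This disposes of the ``only if'' direction at once.

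For the ``if'' direction I would fix a square (\ref{eq:semi-left-exact reflection}) and show $I(\pi_2)$ is an isomorphism. Writing $P=C\times_{HI(C)}M$, by condition $(b)$ it suffices to check that $U(HI(\pi_2))$ is a bijection. Naturality of $\eta$ at $\pi_2$, together with $\eta_M$ being an identity since $M\in\mathbb{M}$, gives $U(HI(\pi_2))\circ U(\eta_P)=U(\pi_2)$. Because $U$ is left-exact $(a)$, $U(P)$ is the pullback of $U(\eta_C)$ and $U(g)$ in $\mathbf{Set}$, so $U(\pi_2)$ is the pullback of the surjection $U(\eta_C)$ (surjective by $(c)$) and is therefore itself a surjection; as $U(\eta_P)$ is also surjective by $(c)$, surjectivity of $U(HI(\pi_2))$ follows immediately.

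The substantial point is injectivity, where I would invoke Lemma \ref{lemma:surjection => injectivity in Set} with $f=U(\eta_P)$, $g=U(HI(\pi_2))$ and $gf=U(\pi_2)$. It remains to verify, for each $c\in U(M)$, that $U(\eta_P)$ is constant on the fibre $U(\pi_2)^{-1}(\{c\})$. I would realise $c$ geometrically: by condition $(d)$ choose $s\colon T\to M$ with $U(s)(\ast)=c$, put $\mu:=g\circ s\colon T\to HI(C)$, and form the connected component $C_\mu=C\times_{HI(C)}T$. The pair $(\pi_1^\mu,\,s\circ\pi_2^\mu)$ is compatible over $HI(C)$, so it induces $\phi\colon C_\mu\to P$ with $\pi_2\circ\phi=s\circ\pi_2^\mu$; since $U$ preserves these pullbacks, $U(\phi)$ maps $U(C_\mu)$ bijectively onto $U(\pi_2)^{-1}(\{c\})$. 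Naturality of $\eta$ at $\phi$ gives $U(\eta_P)\circ U(\phi)=U(HI(\phi))\circ U(\eta_{C_\mu})$, and the hypothesis $HI(C_\mu)\cong T$ makes $U(HI(C_\mu))$ a one-point set, so the right-hand side is constant. Thus $U(\eta_P)$ is constant on the fibre, the Lemma applies, and $U(HI(\pi_2))$ is injective, hence bijective.

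The main obstacle is precisely this fibrewise collapse: one must pass from the set-theoretic element $c\in U(M)$ to an actual morphism $s\colon T\to M$ in $\mathbb{C}$ in order to build a genuine connected component mapping into $P$, and it is condition $(d)$ that makes this possible, while defining $\mu=g\circ s$ ensures the squares match on the nose rather than merely after applying $U$. Once $\phi$ is in hand, the connectedness hypothesis enters only through the triviality of $U(HI(C_\mu))$, and the rest is the bookkeeping packaged by Lemma \ref{lemma:surjection => injectivity in Set}.
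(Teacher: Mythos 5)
Your proof is correct and follows essentially the same route as the paper's: the ``only if'' direction specialises the square (\ref{eq:semi-left-exact reflection}) to $M=T$, and the ``if'' direction uses $(a)$--$(c)$ to reduce to injectivity of $UHI(\pi_2)$, uses $(d)$ to lift each element of $U(M)$ to a morphism $T\to M$, identifies the resulting fibre with a connected component by pullback pasting (your $\phi\colon C_\mu\to P$ is the paper's $pr_1\colon C_{g\mu}\to C\times_{HI(C)}M$), and concludes via Lemma \ref{lemma:surjection => injectivity in Set}. The only difference is that you explicitly verify the surjectivity of $UHI(\pi_2)$ needed to apply the Lemma, a point the paper leaves tacit in this proof (though it is spelled out in the proof of Theorem \ref{theorem:stable units=product of connected components connected}).
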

\begin{proof}

If $I\dashv H$ is semi-left-exact then, by Definition
\ref{def:semi-left-exact reflection}, $I(C\times_{HI(C)}M)$ must be
isomorphic to $I(M)$ in diagram (\ref{eq:semi-left-exact
reflection}), since $I(\eta_C)$ is an isomorphism.\footnote{$\
\varepsilon_{I(C)}I(\eta_C)=1_{I(C)}$, where $\varepsilon
:IH\rightarrow 1_\mathbb{M}$ is the counit of the full reflection
and therefore an isomorphism.} In particular,
$I(C\times_{HI(C)}M)\cong I(T)$ if $M\cong T$.

Suppose now that every connected component is connected, that is,
$I(C_\mu)\cong T$ for every $\mu :T\rightarrow HI(C)$,
$C\in\mathbb{C}$, and consider the diagram:
\\
\\
\\
\\

\begin{equation}\label{eq:admissibility=connected components connected}
\vcenter{ $$
\begin{picture}(250,45)
\put(155,107){$pr_2$}\put(-20,50){$C\times_{HI(C)}M$}\put(-5,100){$C_{g\mu}$}\put(128,50){$HI(C\times_{HI(C)}M)$}\put(295,50){$M$}\put(298,100){$T$}
\put(-12,75){$pr_1$}\put(305,75){$\mu$}\put(60,42){$\eta_{C\times_{HI(C)}M}$}\put(230,42){$HI(\pi_2)$}
\put(40,53){\vector(1,0){80}}\put(210,53){\vector(1,0){80}}\put(20,103){\vector(1,0){270}}
\put(3,95){\vector(0,-1){33}}\put(300,95){\vector(0,-1){33}}

\put(73,80){$HI(C_{g\mu})$}\put(120,80){\vector(2,-1){35}}\put(135,75){$HI(pr_1)$}\put(15,95){\vector(4,-1){50}}\put(40,93){$\eta_{C_{g\mu}}$}

\put(0,0){$C$}\put(155,0){$HI(C)$}\put(285,0){$HI(C)$\hspace{10pt}.}
\put(-10,25){$\pi_1$}\put(305,25){$g$}\put(70,-8){$\eta_C$}\put(230,-8){$1_{HI(C)}$}
\put(15,3){\vector(1,0){130}}\put(190,3){\vector(1,0){90}}
\put(3,45){\vector(0,-1){33}}\put(300,45){\vector(0,-1){33}}

\put(170,25){$HI(\pi_1)$}\put(165,45){\vector(0,-1){33}}

\end{picture}
$$\\
}
\end{equation}

The bottom rectangle in diagram (\ref{eq:admissibility=connected
components connected}) is a pullback square of the form
(\ref{eq:semi-left-exact reflection}), since
$HI(\pi_2)\eta_{C\times_{HI(C)}M}=\eta_M\pi_2$ and $\eta_M$ is an
identity, because $M\in\mathbb{M}$ (cf. remark \ref{remark:HI=1}).
According to $(a)$, $(b)$ and $(c)$ in section \ref{sec:Ground
Structure}, the reflection $I\dashv H$ is semi-left-exact if and
only if $UHI(\pi_2)$ is an injection in $\mathbf{Set}$, in every
diagram (\ref{eq:admissibility=connected components connected}). The
upper rectangle in diagram (\ref{eq:admissibility=connected
components connected}) (associated to the equation $\mu
pr_2=HI(\pi_2)\eta_{C\times_{HI(C)}M}pr_1$) is a pullback square,
therefore the outer rectangle in diagram
(\ref{eq:admissibility=connected components connected}) is in fact a
pullback square of the form (\ref{eq:connected component}), and
$C_{g\mu}$ is the connected component associated to $g\mu
:T\rightarrow HI(C)$. Then, as $(d)$ in section \ref{sec:Ground
Structure} holds, by Lemma \ref{lemma:surjection => injectivity in
Set}, $UHI(\pi_2)$ is an injection since every connected component
is connected, in particular $HI(C_{g\mu})\cong T$, for any morphisms
$g:M\rightarrow HI(C)$, with $M\in\mathbb{M}$, and $\mu
:T\rightarrow M$, with $T$ terminal.

\end{proof}

\section{Stable Units Property and Product of Connected Components}
\label{sec:Product of Connected Components and the Stable Units
Property}

\begin{theorem}\label{theorem:stable units=product of connected components connected}
Under the assumptions of section \ref{sec:Ground Structure}, the
full reflection $I\dashv H$ has stable units if and only if
$HI(C_\mu\times D_{\nu})\cong T$, for every pair of connected
components $C_\mu$, $D_\nu$, where $T$ is any terminal object.

\end{theorem}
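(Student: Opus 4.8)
The plan is to mirror the structure of the proof of Theorem~\ref{theorem:admissibility=connected components connected}, but now with products of connected components playing the role that single connected components played there. The overall strategy is to reduce the stable-units condition, via properties $(a)$, $(b)$, $(c)$ of the ground structure, to an injectivity statement in $\mathbf{Set}$, and then apply Lemma~\ref{lemma:surjection => injectivity in Set} using condition $(d)$.

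First I would dispatch the ``only if'' direction quickly. If $I\dashv H$ has stable units, then by Definition~\ref{def:stable units} the functor $I$ preserves the pullback square~(\ref{eq:stable units}); specialising $D$ to a product of two connected components, and using that $I(\eta_C)$ is an isomorphism (via $\varepsilon_{I(C)}I(\eta_C)=1_{I(C)}$), forces the relevant $I$-image to be isomorphic to $I$ of the product. Taking $C$ terminal and $D=C_\mu\times D_\nu$ then yields $HI(C_\mu\times D_\nu)\cong T$. The bulk of the work is the converse.

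For the ``if'' direction, suppose every finite product of connected components is connected, and consider an arbitrary stable-units pullback square~(\ref{eq:stable units}) with top-right object $D\in\mathbb{C}$ (by Remark~\ref{remark:stable units} it suffices to test preservation where the bottom-right corner is in $\mathbb{M}$, so I would set up the analogue of diagram~(\ref{eq:admissibility=connected components connected}) with $M$ replaced by a general $D$, or rather with $g:D\rightarrow HI(C)$). As before, $I\dashv H$ preserves this square if and only if the induced map on $UHI$ of the relevant projection is an injection in $\mathbf{Set}$. The key geometric point is that the fibre of $gf$ (in the notation of Lemma~\ref{lemma:surjection => injectivity in Set}) over a point of $HI(C)$ now decomposes as a \emph{product} of a connected component $C_\mu$ of $C$ with a connected component $D_\nu$ of $D$: pulling back along a point $\mu:T\rightarrow HI(C)$ separates the two factors $C$ and $D$ lying over that point, and since $U$ is left-exact the set-level fibre is exactly $U(C_\mu)\times U(D_\nu)$ up to the relevant identifications. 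Then, by hypothesis, $HI(C_\mu\times D_\nu)\cong T$, so the composite $fw$ in the Lemma factors through a one-point set, and the Lemma delivers the required injectivity.

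The main obstacle I expect is making the factorisation over a point of $HI(C)$ precise as a product of connected components, rather than merely a connected component of the total space. One must check that the pullback of the stable-units square along $\mu:T\rightarrow HI(C)$ genuinely exhibits the fibre as $C_\mu\times D_\nu$ (where $D_\nu$ is the component of $D$ over the image point $g$ composed appropriately), and that this product is again of the ``finite product of connected components'' form covered by the hypothesis. This is where the difference from Theorem~\ref{theorem:admissibility=connected components connected} lives: there the right-hand object was terminal so only one connected component appeared, whereas here the non-trivial fibre forces a binary product, which is exactly why the hypothesis is strengthened from ``connected components are connected'' to ``finite products of connected components are connected''. Once this identification is in place, conditions $(a)$--$(d)$ and Lemma~\ref{lemma:surjection => injectivity in Set} apply verbatim as in the previous theorem.
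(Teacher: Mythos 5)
Your ``if'' direction follows the paper's own route: reduce preservation of the square (\ref{eq:stable units}) to injectivity of $UHI(\pi_2)$ using $(a)$--$(c)$, identify the fibre of $UHI(\pi_2)\circ U(\eta_{C\times_{HI(C)}D})$ over a point as a product of two connected components, and conclude by Lemma \ref{lemma:surjection => injectivity in Set} together with $(d)$ and the hypothesis $HI(C_\mu\times D_{\nu})\cong T$. Two caveats. First, the point must be taken in $HI(D)$ (the codomain of $UHI(\pi_2)$), not in $HI(C)$: a morphism $\nu:T\rightarrow HI(D)$ yields the fibre $C_{HI(g)\nu}\times D_\nu$, the $C$-factor being indexed by the composite $HI(g)\nu$. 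Second, the identification of the fibre with this product is precisely the substance of the proof, and you only flag it as an obstacle; the paper carries it out by constructing the comparison morphism $w:C_{HI(g)\nu}\times D_\nu\rightarrow C\times_{HI(C)}D$ and, for any $l:A\rightarrow C\times_{HI(C)}D$ lying over $\nu$, the morphism $\bar{l}=\langle l_1,l_2\rangle$ into the product, then checking $w\bar{l}=l$ and that $w$ is a monomorphism. Since you correctly isolate this as the step to be done and the surrounding machinery is the paper's, the ``if'' direction is essentially the intended argument in outline.

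The ``only if'' direction, as you state it, does not work. Taking $C$ terminal in the square (\ref{eq:stable units}) makes $\eta_C$ an isomorphism, so $C\times_{HI(C)}D\cong D$ and preservation by $I$ reduces to the tautology $I(D)\cong I(D)$; no information about $HI(C_\mu\times D_\nu)$ is obtained. The paper instead observes that a binary product is a pullback over the terminal object $T\in\mathbb{M}$, so by Remark \ref{remark:stable units} the stable-units property forces $I$ to preserve finite products; combined with Theorem \ref{theorem:admissibility=connected components connected} (applicable because stable units implies semi-left-exactness), which gives $HI(C_\mu)\cong T\cong HI(D_\nu)$, one obtains $HI(C_\mu\times D_\nu)\cong T\times T\cong T$. (Equivalently, once $HI(C_\mu)\cong T$ is known, $C_\mu\times D_\nu$ is itself the stable-units pullback $C_\mu\times_{HI(C_\mu)}D_\nu$.) Both ingredients --- preservation of finite products and connectedness of the individual components --- are absent from your sketch, so this half needs to be redone.
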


\begin{proof}

If $I\dashv H$ has stable units then the functor $I$ preserves
finite products, since a product diagram is a pullback square in
which the right corner in the bottom is a terminal object
$T\in\mathbb{M}$ (cf. remark \ref{remark:stable units}). Therefore,
$HI(C_\mu\times D_{\nu})\cong T$ since $HI(C_\mu)\cong T\cong
HI(D_\nu)$, by Theorem \ref{theorem:admissibility=connected
components connected}, for every pair of connected components
$C_\mu$, $D_\nu$.

Suppose now that every product of two connected components is
connected, i.e., $HI(C_\mu\times D_{\nu})\cong T$ for every pair of
morphisms $\mu :T\rightarrow HI(C)$ and $\nu: T\rightarrow HI(D)$,
$C,D\in \mathbb{C}$, and consider the diagram:
\\
\\
\\
\\
\\
\\

\begin{equation}\label{eq:stable units=product of connected components connected}
\vcenter{ $$
\begin{picture}(250,80)
\put(155,108){$\pi_2$}\put(45,100){$C\times_{HI(C)}D$}\put(128,50){$HI(C\times_{HI(C)}D)$}
\put(295,50){$HI(D)$}\put(298,100){$$}
\put(305,75){$\eta_D$}\put(230,57){$HI(\pi_2)$}\put(295,100){$D$}
\put(205,53){\vector(1,0){80}}\put(105,103){\vector(1,0){185}}
\put(68,95){\vector(0,-1){83}}\put(300,95){\vector(0,-1){33}}

\put(65,0){$C$}\put(155,0){$HI(C)$}\put(285,0){$HI(C)$\hspace{10pt}.}
\put(55,50){$\pi_1$}\put(305,25){$HI(g)$}\put(110,-8){$\eta_C$}\put(230,-8){$1_{HI(C)}$}
\put(80,3){\vector(1,0){65}}\put(190,3){\vector(1,0){90}}
\put(300,45){\vector(0,-1){33}}

\put(170,25){$HI(\pi_1)$}\put(165,45){\vector(0,-1){33}}\put(100,90){\vector(2,-1){50}}\put(128,80){$\eta_{C\times_{HI(C)}D}$}

\put(-40,0){$C_{HI(g)\nu}$}\put(-5,3){\vector(1,0){65}}\put(15,-13){$\pi_1^{HI(g)\nu}$}
\put(-60,150){$C_{HI(g)\nu}\times
D_\nu$}\put(-30,140){\vector(0,-1){125}}\put(-25,80){$p_1$}
\put(295,150){$D_\nu$}\put(300,145){\vector(0,-1){30}}\put(305,130){$\pi_1^\nu$}\put(0,155){\vector(1,0){290}}
\put(140,160){$p_2$}\put(0,145){\vector(2,-1){65}}\put(40,130){$w$}

\end{picture}
$$\\}
\end{equation}

The inside rectangle in diagram (\ref{eq:stable units=product of
connected components connected}) is a pullback square of the form
(\ref{eq:stable units}), since $HI(g)\eta_D =\eta_{HI(C)}g$ and
$\eta_{HI(C)}$ is an identity, because $HI(C)\in\mathbb{M}$ (cf.
remark \ref{remark:HI=1}).

According to $(a)$, $(b)$ and $(c)$ in section \ref{sec:Ground
Structure}, the reflection $I\dashv H$ has stable units if and only
if $UHI(\pi_2)$ is an injection in $\mathbf{Set}$, for every diagram
of the form (\ref{eq:stable units}). In fact, $UHI(\pi_2)$ is
obviously a surjection, since
$UHI(\pi_2)U(\eta_{C\times_{HI(C)}D})=U(\eta_D)U(\pi_2)$ and
$U(\eta_{C\times_{HI(C)}D})$, $U(\eta_D)$ and $U(\pi_2)$ are all
surjections by the assumptions in section \ref{sec:Ground
Structure}.
 The morphisms $p_1$ and $p_2$ in diagram (\ref{eq:stable units=product of connected components
 connected}) are the product projections of the product of the
 connected components $C_{HI(g)\nu}$ and $D_\nu$. The morphism $w$
 is the unique morphism which makes diagram (\ref{eq:stable units=product of connected components
 connected}) commute; it is well defined since

\begin{center}

 $HI(g)\eta_D\pi_1^\nu p_2=HI(g)\nu\pi_2^\nu p_2=$\\
 $=HI(g)\nu\pi_2^{HI(g)\nu}p_1$\\
 (because both $\pi_2^\nu p_2$ and
 $\pi_2^{HI(g)\nu}p_1$ have the same domain and codomain,\\ the
 latter being the terminal object $T$)\\
$=\eta_C\pi_1^{HI(g)\nu}p_1$.\\

\end{center}

Then, as $(d)$ in section \ref{sec:Ground Structure} holds, by Lemma
\ref{lemma:surjection => injectivity in Set}, $UHI(\pi_2)$ is an
injection if the outer rectangle in the following diagram is a
pullback square, for every morphism $\nu :T\rightarrow HI(D)$ from
the terminal object into $HI(D)$ (cf. diagram (\ref{eq:surjection =>
injectivity in
Set})):
\\
\\
\\

\begin{equation}\label{eq:stable units:surjection => injectivity in Set}
\vcenter{ $$
\begin{picture}(200,20)
\put(-20,0){$C\times_{HI(C)}D$}\put(-20,50){$C_{HI(g)\nu}\times
D_\nu$}\put(40,3){\vector(1,0){35}}\put(165,3){\vector(1,0){35}}\put(170,-8){$HI(\pi_2)$}\put(205,0){$HI(D)$\hspace{10pt}.}
\put(215,45){\vector(0,-1){33}}\put(212,50){$T$}\put(220,25){$\nu$}\put(70,58){$p_2$}
\put(45,53){\vector(1,0){65}}\put(120,50){$D_\nu$}\put(160,58){$\pi_2^\nu$}\put(140,53){\vector(1,0){65}}
\put(120,25){\vector(0,-1){13}}\put(125,15){$HI(w)$}
\put(20,32){$\eta_{C_{HI(g)\nu}\times D_\nu}$}

\put(85,0){$HI(C\times_{HI(C)}D)$}
\put(-10,25){$w$}\put(40,-8){$\eta_{C\times_{HI(C)}D}$}

\put(3,45){\vector(0,-1){33}}

\put(83,30){$HI(C_{HI(g)\nu}\times
D_\nu)$}\put(35,45){\vector(4,-1){40}}
\end{picture}
$$\\
}
\end{equation}

In order to show that the outer rectangle in diagram (\ref{eq:stable
units:surjection => injectivity in Set}) is a pullback square,
consider a morphism $l:A\rightarrow C\times_{HI(C)}D$ such that
$HI(\pi_2)\eta_{C\times_{HI(C)}D}l=\nu !$. Let $\bar{l}=\langle
l_1,l_2 \rangle:A\rightarrow C_{HI(g)\nu}\times D_\nu$ be the
morphism into the product of the two connected components, in which
$l_1:A\rightarrow C_{HI(g)\nu}$ and $l_2:A\rightarrow D_\nu$ are the
morphisms determined in the pullback squares of the connected
components by $\pi_1^{C_{HI(g)\nu}}l_1=\pi_1l$ and
$\pi_1^{D_\nu}l_2=\pi_2l$, respectively. It is then a routine
calculation to verify that $w$ is a monomorphism and $w\bar{l}=l$.

\end{proof}

\begin{remark}\label{remark:stable units=product of connected components
connected}

It is an immediate consequence of Theorems \ref{theorem:stable
units=product of connected components connected} and
\ref{theorem:admissibility=connected components connected} that,
provided the preservation of finite products by the left adjoint $I$
is added to the assumptions of section \ref{sec:Ground Structure},
the reflection $I\dashv H$ has stable units if and only if it is
semi-left-exact.

\end{remark}

\section{Left-Exactness and Pullbacks of Connected Components}

The following Theorem \ref{theorem:pullbacks of connected components
connected=>left-exactness} gives a sufficient condition for the
reflection $I\dashv H$ to be a localization, that is, for the left
adjoint $I$ to be left exact (see section \ref{sec:Properties of the
Reflection}).

\begin{theorem}\label{theorem:pullbacks of connected components connected=>left-exactness}
Under the assumptions of section \ref{sec:Ground Structure}, the
full reflection $I\dashv H$ is a localization if
$HI(A_\mu\times_CB_\nu)\cong T$, for every pullback
$A_\mu\times_CB_\nu$ of any pair of connected components $A_\mu$,
$B_\nu$, where $T$ is any terminal object. That is, the left adjoint
is left exact if every pullback of connected components is
connected.

\end{theorem}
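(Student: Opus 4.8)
The plan is to mirror the structure of the proof of Theorem \ref{theorem:stable units=product of connected components connected}, replacing the role played there by products of connected components with pullbacks of connected components. Concretely, I want to show that left-exactness of $I$ reduces, via conditions $(a)$--$(d)$ of section \ref{sec:Ground Structure}, to checking that $UHI$ sends a single comparison map into an injection, and that this injectivity is precisely what the hypothesis $HI(A_\mu\times_C B_\nu)\cong T$ delivers through Lemma \ref{lemma:surjection => injectivity in Set}. First I would recall that, since the reflection is already semi-left-exact (the hypothesis here is strictly stronger than that of Theorem \ref{theorem:stable units=product of connected components connected}, as a terminal object is a special base), it suffices to prove that $I$ preserves an arbitrary pullback $A\times_C B$ of a cospan $f\colon A\to C$, $h\colon B\to C$. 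By the general theory, $I$ preserves finite limits as soon as it preserves all such pullbacks, so the target is the comparison morphism $I(A\times_C B)\to I(A)\times_{I(C)} I(B)$, and I would show $HI$ applied to the induced projection is injective on underlying sets.

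The second step is to set up the analogue of diagram (\ref{eq:stable units:surjection => injectivity in Set}). Given the pullback $A\times_C B$ over the cospan, and a point $\mu\colon T\to HI(C)$ (or rather the fibre data arising from a morphism into $HI(A\times_C B)$), I would pull back along $\eta$ to form the connected components $A_\mu$ and $B_\nu$ living over a common $C$, and then form their fibre product $A_\mu\times_C B_\nu$. The key diagram chase is to verify that the outer rectangle formed by the comparison map $w\colon HI(A_\mu\times_C B_\nu)\to HI(A\times_C B)$, the unit, and the projection is a pullback square, exactly as in (\ref{eq:stable units:surjection => injectivity in Set}). The construction of the mediating morphism $\bar{l}=\langle l_1,l_2\rangle$ proceeds as before: a test map $l$ into the total fibre product factors through each connected component by the universal property of the respective pullbacks, and these factorisations are compatible over $C$ because both composites land in the terminal object, forcing agreement of the two descriptions over $HI(C)$.

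With the pullback square in place, Lemma \ref{lemma:surjection => injectivity in Set} applies: the surjections $U(\eta)$ from condition $(c)$ and the surjectivity coming from condition $(d)$ combine to show that $UHI$ of the projection is an injection precisely when each relevant $fw$ factors through a one-point set, and the hypothesis $HI(A_\mu\times_C B_\nu)\cong T$ guarantees that $HI(A_\mu\times_C B_\nu)$ is terminal, so the factorisation through a point is automatic. Since $UH$ reflects isomorphisms by $(b)$ and $U$ is left-exact by $(a)$, injectivity plus the already-established surjectivity upgrades to $UHI(\pi)$ being a bijection and hence, after reflection, an isomorphism, which is exactly preservation of the pullback by $I$.

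The main obstacle I anticipate is the verification that the outer rectangle is genuinely a pullback, that is, the well-definedness and monicity of the comparison morphism $w$ over an arbitrary base $C$ rather than over a terminal object. In the stable-units proof the two legs landed in $T$, which made the compatibility of the two connected-component factorisations trivial; here the cospan sits over a general $C\in\mathbb{C}$, so I must carefully check that $l_1$ and $l_2$ agree after projecting to $HI(C)$. I expect this to require the left-exactness of $U$ together with the fact that $UH$ reflects isomorphisms, and it is the step where the strength of the hypothesis (connectedness of \emph{pullbacks}, not merely products, of connected components) is genuinely consumed.
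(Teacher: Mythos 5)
Your overall route---reduce left-exactness to preservation of an arbitrary pullback $A\times_C B$, analyse the comparison into $HI(A)\times_{HI(C)}HI(B)$ fibrewise, identify the fibre over a compatible pair of points $(\mu,\nu)$ with $U(A_\mu\times_C B_\nu)$, and invoke Lemma \ref{lemma:surjection => injectivity in Set} for injectivity---is essentially the paper's route. But there is one genuine gap: you treat the surjectivity of the relevant map as ``already established'' from conditions $(c)$ and $(d)$, by analogy with Theorems \ref{theorem:admissibility=connected components connected} and \ref{theorem:stable units=product of connected components connected}. That analogy breaks down here. In those theorems the map in question was $UHI(\pi_2)$ with $\pi_2$ a projection of a pullback along a unit morphism, so $U(\pi_2)$ was a pullback of the surjection $U(\eta_C)$ and surjectivity came for free. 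For a general pullback the map whose bijectivity is needed is the comparison $U(w):UHI(A\times_C B)\to UHI(A)\times_{UHI(C)}UHI(B)$, which is not $UHI$ of a projection, and its surjectivity says exactly that every fibre $U(A_\mu\times_C B_\nu)$ is nonempty. That is \emph{not} automatic: it is the second place the hypothesis $HI(A_\mu\times_C B_\nu)\cong T$ is consumed (via surjectivity of $U(\eta_{A_\mu\times_C B_\nu})$ onto $U(T)=\{\ast\}$). The counterexamples in section \ref{sec:Examples} (the empty pullback of $\hat{0},\hat{1}$ in $\mathbf{CompHaus}$, the empty semigroup as pullback of $\{0\}$ and $\{1,2,\dots\}$ over $\mathbb{Z}$) are reflections with stable units---so everything you call ``already established'' holds there---and yet they fail to be localizations precisely because this surjectivity fails. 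Since Lemma \ref{lemma:surjection => injectivity in Set} assumes both $f$ and $g$ are surjections before it asserts anything, your application of it is not licensed until this step is supplied.

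A secondary, related imprecision: you oscillate between ``$HI$ applied to the induced projection'' and ``the comparison map $w:HI(A_\mu\times_C B_\nu)\to HI(A\times_C B)$'' as the object of study. Neither is right as written. The map to be shown bijective after applying $U$ (then upgraded by $(a)$ and $(b)$) is $HI(A\times_C B)\to HI(A)\times_{HI(C)}HI(B)$, while the map you must exhibit as the pullback of a point of $HI(A)\times_{HI(C)}HI(B)$ along $\eta_{A\times_C B}$ composed with that comparison is $j:A_\mu\times_C B_\nu\to A\times_C B$, the analogue of the $w$ of diagram (\ref{eq:stable units=product of connected components connected}). Once these are disentangled and the surjectivity step is added, your argument coincides with the paper's. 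Your anticipated obstacle about the compatibility of $l_1$ and $l_2$ is unfounded: it follows directly from $f\pi_1 l=g\pi_2 l$ and from $\mu$, $\nu$ being the two legs of a single morphism $T\to HI(A)\times_{HI(C)}HI(B)$ (which exists by $(d)$, since this pullback lies in $\mathbb{M}$), with no appeal to $(b)$ needed at that point.
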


\begin{proof}
Consider the diagram
\\
\\
\\
\\
\\
\\
\\
\\
\\
\\
\\
\\
\\
\\
\\

\begin{equation}\label{eq:pullbacks of connected components connected=>left-exactness}
\vcenter{ $$
\begin{picture}(250,125)

\put(0,0){$A_\mu$}\put(15,5){\vector(1,0){45}}\put(30,-8){$\pi_1^\mu$}\put(68,0){$A$}
\put(80,10){\vector(1,1){25}}\put(80,30){$\eta_A$}\put(80,5){\vector(1,0){150}}\put(155,-8){$f$}\put(240,0){$C$\hspace{10pt},}
\put(235,10){\vector(-1,1){25}}\put(225,30){$\eta_C$}

\put(243,190){\vector(0,-1){170}}\put(250,105){$g$}\put(240,200){$B$}
\put(235,195){\vector(-1,-1){20}}\put(215,188){$\eta_B$}
\put(243,245){\vector(0,-1){30}}\put(250,230){$\pi_1^\nu$}\put(240,255){$B_\nu$}

\put(-15,255){$A_\mu\times_CB_\nu$}\put(5,245){\vector(0,-1){225}}\put(-15,130){$p_1$}\put(30,260){\vector(1,0){200}}\put(118,268){$p_2$}
\put(30,245){\vector(1,-1){30}}\put(45,235){$j$}

\put(55,200){$A\times_CB$}\put(95,205){\vector(1,0){135}}\put(155,212){$\pi_2$}\put(70,190){\vector(0,-1){170}}\put(55,105){$\pi_1$}
\put(90,195){\vector(1,-1){20}}\put(100,190){$\eta_{A\times_CB}$}

\put(93,40){$HI(A)$}\put(145,30){$HI(f)$}\put(130,45){\vector(1,0){60}}
\put(195,40){$HI(C)$}\put(195,160){$HI(B)$}\put(210,150){\vector(0,-1){90}}\put(213,105){$HI(g)$}
\put(80,160){$HI(A\times_CB)$}\put(110,150){\vector(0,-1){90}}\put(75,105){$HI(\pi_1)$}
\put(145,170){$HI(\pi_2)$}\put(140,165){\vector(1,0){50}}\put(125,150){\vector(1,-1){20}}\put(135,145){$w$}

\put(112,120){$HI(A)\times_{HI(C)}HI(B)$}\put(165,130){\vector(1,1){25}}\put(150,110){\vector(-1,-2){28}}

\end{picture}
$$\\
}
\end{equation}

\noindent wherein $A_\mu\times_CB_\nu =A_\mu\times_{(f\pi_1^\mu
,g\pi_1^\nu )}B_\nu$ and
$HI(A)\times_{HI(C)}HI(B)=HI(A)\times_{(HI(f),HI(g))}HI(B)$ are
pullbacks, and $j$ and $w$ are the unique morphisms making the
diagram commute.

One has to prove that $U(w)$ is always a bijection. It follows from
$I(A_\mu\times_CB_\nu)\cong T$ that
$U(A_\mu\times_CB_\nu)\neq\emptyset$, for all connected components
$A_\mu$, $B_\nu$, which implies that $U(w)$ is a surjection, under
the assumptions of section \ref{sec:Ground Structure}. Note that
$U(\eta_{A\times_CB})^{-1}U(w)^{-1}(U(A_\mu),U(B_\nu))=U(A_\mu\times_CB_\nu)$
in $\mathbf{Set}$, which implies that $U(w)$ is an injection, since
$UHI(j)U(\eta_{A_\mu\times_CB_\nu})=U(\eta_{A\times_CB})U(j)$ and
$UHI(A_\mu\times_CB_\nu)=\{\ast\}$.
\end{proof}

\section{Admissibility of a Simple Reflection}
\label{sec:Admissibility of a Simple Reflection}

\begin{theorem}\label{theorem:Admissibility of a Simple Reflection}
Let the following condition and all assumptions of section
\ref{sec:Ground Structure} hold:

\noindent $(e)$ every map
$I_{T,C}:\mathbb{C}(T,C)\rightarrow\mathbb{M}(T,I(C))$, the
restriction of the reflector $I$ to the hom-set $\mathbb{C}(T,C)$,
is a surjection, for every object $C\in\mathbb{C}$, with $T=HI(T)$ a
terminal object in $\mathbb{C}$. Then, the reflection $I\dashv H$ is
semi-left-exact if and only if it is simple.

\end{theorem}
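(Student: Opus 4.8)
The plan is to prove the two implications of Theorem \ref{theorem:Admissibility of a Simple Reflection} separately. One direction is already settled by the general theory recorded in the excerpt: by remark \ref{remark:semi-left-exact reflection}, every semi-left-exact reflection is simple, and this holds without any appeal to condition $(e)$. So the whole content lies in the converse, and I would begin by stating that it suffices to show that simplicity, together with $(a)$--$(e)$, forces semi-left-exactness. By Theorem \ref{theorem:admissibility=connected components connected}, semi-left-exactness is equivalent to $HI(C_\mu)\cong T$ for every connected component $C_\mu$; equivalently, since $HI(T)\cong T$, to $I(\pi_2^\mu)$ being an isomorphism in diagram (\ref{eq:connected component}). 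Thus the target reduces to a statement purely about connected components, and I would reformulate the goal as: assuming $I\dashv H$ simple and $(e)$, prove that $I(\pi_2^\mu):I(C_\mu)\to I(T)=T$ is an isomorphism for every $\mu:T\to HI(C)$.

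Next I would unwind what simplicity gives us on the connected-component square. The connected component square (\ref{eq:connected component}) is exactly a pullback of a morphism $\mu:T\to HI(C)$ along the unit $\eta_C$; comparing with the simple-reflection diagram (\ref{eq:simple reflection}), I would take $A=T$, $f=\mu$, $B=C_\mu$ (or arrange the data so that $\mu$ plays the role of the map whose pullback is formed), so that simplicity tells us the induced comparison morphism $I(w)$ is an isomorphism. Concretely, Definition \ref{def:simple reflection} applied to the pullback defining $C_\mu$ yields that $I$ inverts the canonical map $w:C_\mu\to\,\cdot\,$ from the connected component into the object computed from $HI(\mu)$; the payoff is an isomorphism in $\mathbb{M}$ relating $I(C_\mu)$ to an object built from $\mu$ and $HI(C)$, rather than the unconditional isomorphism $I(C_\mu)\cong T$ we are after.

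The role of condition $(e)$, and the heart of the argument, is to upgrade this ``$I$ inverts $w$'' statement into the connectedness $I(C_\mu)\cong T$. The idea is that $(e)$ controls the global points of $I(C)$: every morphism $T\to I(C)$ in $\mathbb{M}$ is $I$ of some morphism $T\to C$ in $\mathbb{C}$. Since a morphism $\mu:T\to HI(C)$ is the same as a global point of $I(C)$ in $\mathbb{M}$ (using $HI(T)\cong T$ and full faithfulness of $H$), condition $(e)$ lets me lift $\mu$ to an actual morphism $\tilde\mu:T\to C$ with $I(\tilde\mu)=\mu$ up to the counit isomorphism. That lift supplies a section-type morphism $T\to C_\mu$ through the universal property of the pullback (since $\eta_C\tilde\mu$ and $\mu$ agree as maps $T\to HI(C)$), exhibiting $T$ as a retract, and combined with the simplicity-isomorphism $I(w)$ this should pin $I(C_\mu)$ down to $T$. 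I would then verify, using that $U$ is faithful enough via $(b)$ that $UH$ reflects isomorphisms, that the resulting comparison $I(\pi_2^\mu):I(C_\mu)\to T$ is genuinely invertible and not merely a split epi.

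The main obstacle I anticipate is the bookkeeping in the middle step: matching the connected-component pullback square precisely against the shape of diagram (\ref{eq:simple reflection}) so that the simplicity hypothesis applies to the correct morphism, and then checking that the morphism $I(w)$ whose invertibility simplicity delivers is the same comparison map that condition $(e)$ turns into an isomorphism onto $T$. Getting the two universal properties to name the same arrow is where the argument can silently go wrong, so I would be careful to write both squares with explicit projections $\pi_1^\mu,\pi_2^\mu$ and to track the counit identifications from remark \ref{remark:HI=1}. Once the arrow is correctly identified, concluding $HI(C_\mu)\cong T$ and hence invoking Theorem \ref{theorem:admissibility=connected components connected} to get semi-left-exactness should be routine.
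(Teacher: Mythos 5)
Your proposal follows essentially the same route as the paper: use condition $(e)$ to lift $\mu$ to a morphism $\nu:T\rightarrow C$ with $HI(\nu)=\mu$, recognize the connected-component square (\ref{eq:connected component}) as the simplicity square (\ref{eq:simple reflection}) for $f=\nu$ (so the section $w:T\rightarrow C_\mu$ with $\pi_2^\mu w=1_T$ supplied by the pullback's universal property is exactly the comparison morphism that simplicity inverts), and conclude from $I(\pi_2^\mu)I(w)=1_T$ that $I(\pi_2^\mu)$ is an isomorphism, whence Theorem \ref{theorem:admissibility=connected components connected} gives semi-left-exactness. One small correction of order and direction: simplicity cannot be applied to the connected-component square before the lift, since $\mu$ need not be of the form $HI(f)$ until $(e)$ provides $\nu$, and the comparison morphism runs $T\rightarrow C_\mu$ rather than out of $C_\mu$ --- but you flag precisely this bookkeeping as the crux, and your resolution (the lift, the section, and the simplicity isomorphism being the same arrow) is the paper's argument.
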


\begin{proof}

Suppose that $I\dashv H$ is a simple reflection, that is, $I(w)$ is
an isomorphism in every diagram of the form (\ref{eq:simple
reflection}) in Definition \ref{def:simple reflection}, and consider
the pullback square (\ref{eq:connected component}) in Definition
\ref{def:connected component}. Let $w:T\rightarrow C_\mu$ be the
unique morphism such that $\pi_1^\mu w=\nu$ and $\pi_2^\mu w=1_T$,
where $\nu$ is such that $HI(\nu)=\mu$ ($\nu$ exists by $(e)$ in the
statement). Note that the composite $I(\pi_2^\mu )I(w)$ is the
isomorphism $1_T$. Therefore, $I(\pi_2^\mu )$ is an isomorphism,
since $I(w)$ is an isomorphism by assumption.

\end{proof}

\section{Examples}
\label{sec:Examples}

\noindent 1. Consider the full reflection of compact Hausdorff
spaces into Stone spaces $H\vdash I:\mathbf{CompHaus}\rightarrow
\mathbf{Stone}$, where each unit map $\eta_X:X\rightarrow HI(X)$ is
the canonical projection of $X$ into the set of its components, this
set being given the quotient topology with respect to $\eta_X$.
Hence, condition $(c)$ in section \ref{sec:Ground Structure} holds
for the functor $U$ which forgets the topology. Conditions $(a)$ and
$(b)$ of section \ref{sec:Ground Structure} hold as well since
$U:\mathbf{CompHaus}\rightarrow\mathbf{Set}$ is monadic, and
condition $(d)$ holds trivially. This reflection is known to have
stable units, therefore finite products of connected components are
connected.

Let $\hat{0}:T\rightarrow [0,1]$ and $\hat{1}:T\rightarrow [0,1]$ be
the two obvious inclusions of the one point topological space into
the closed interval of real numbers $[0,1]$, with the usual
topology. Then, the pullback $T\times_{(\hat{0},\hat{1})}T
=\emptyset$ is the empty space, not connected in our sense, being
clear that the reflector $I$ is not left exact, since it does not
preserve the pullback diagram of $\hat{0}$ and $\hat{1}$, and also
that the sufficient condition of Theorem \ref{theorem:pullbacks of
connected components connected=>left-exactness} does not hold.
\\

\noindent 2. With the exception of $(d)$, every assumption of
section \ref{sec:Ground Structure} hold for any reflection from a
variety of universal algebras into one of its subvarieties, provided
with the forgetful functor into $\mathbf{Set}$. Notice that, for
these reflections, condition $(d)$ of section \ref{sec:Ground
Structure} is equivalent to idempotency of the algebras in the
subvariety, meaning that every element of an algebra in the
subvariety is a subalgebra.

In particular, it is easy to check that condition $(d)$ in section
\ref{sec:Ground Structure} holds for the reflection $H\vdash I:
\mathbf{SGr}\rightarrow \mathbf{SLat}$ of semigroups into
semilattices, which is known to have stable units (see \cite{JLM}).
Therefore, all finite products of connected components are
connected.

The additive semigroup $\mathbb{N}$ of non-negative integers has two
connected components, $\{0\}$ and $\{1,2,3,...\}$, with respect to
the reflection $\mathbf{SGr}\rightarrow \mathbf{SLat}$. The pullback
of the inclusions $\{0\}\rightarrow \mathbb{Z}$ and
$\{1,2,3,...\}\rightarrow \mathbb{Z}$ into the integers is the empty
semigroup $\emptyset$, which is not connected
($I(\emptyset)=\emptyset$ is not terminal). Hence, this reflection
is not a localization, and also the sufficient condition of Theorem
\ref{theorem:pullbacks of connected components
connected=>left-exactness} does not hold.\\

The reflection $H\vdash I: \mathbf{SGr}\rightarrow \mathbf{Band}$ of
semigroups into bands\footnote{A semigroup is called a band if every
one of its elements is idempotent.} is not a semi-left-exact
reflection (cf. \cite{JLM}). Notwithstanding, all assumptions in
section \ref{sec:Ground Structure} hold for this reflection;
therefore not every connected component is connected, by Theorem
\ref{theorem:admissibility=connected components connected} (see
Example 7 in \cite{JLM}).\\

Note that Theorem \ref{theorem:Admissibility of a Simple Reflection}
holds for the reflection $H\vdash I: \mathbf{Band}\rightarrow
\mathbf{SLat}$ from bands into semilattices (a subreflection of
$\mathbf{SGr}\rightarrow \mathbf{SLat}$).\footnote{Remark that in
algebraic instances 2., condition $(d)$ in the ground structure is
crucial, while condition $(b)$ is the crucial one in the former
topological instances 1.}
\\

\noindent 3. Finally, we would like to remark that the joining of
new \textit{geometrical} examples, to the \textit{algebraic} and
\textit{topological} well-known examples above, has been made
possible by a generalization of the assumptions in the ground
structure, done in \cite{X:GenCC}, where a new class of instances is
presented.

\end{document}